\newcommand{\R}{{\mathbb{R}}}
 \newcommand{\inter}{\mathopen [0,1\mathclose]}
\newcommand{\interu}{\mathopen [u\sp{-}(\lambda),
u\sp{+}(\lambda)\mathclose]}
\newcommand{\interv}{\mathopen [v\sp{-}(\lambda),
v\sp{+}(\lambda)\mathclose]}
 \newcommand{\cj}[2]{\left \{\, #1\, : \, #2\, \right\}}
 \newcommand{\n}{\noindent}
\newcommand{\fn}{\mathbb{E}\sp{1}}
\begin{document}

\title[Best approximation]{Best approximation results for fuzzy-number-valued continuous functions}


\author{Juan J. Font}
\address{Institut Universitari
de Matem\`{a}tiques i Aplicacions de Castell\'{o} (IMAC),
Universitat Jaume I, Avda. Sos Baynat s/n, 12071, Castell\'{o}
(Spain).}
\thanks{This research received no funding}
\email{font@uji.es}

\author{Sergio Macario}

\address{Institut Universitari
de Matem\`{a}tiques i Aplicacions de Castell\'{o} (IMAC),
Universitat Jaume I, Avda. Sos Baynat s/n, 12071, Castell\'{o}
(Spain).}
\email{macario@uji.es}

\subjclass[2010]{41A30, 65G40.}

\keywords{}

\date{\today}

%


%



\begin{abstract}
In this paper we study the best approximation of a fixed fuzzy-number-valued continuous function to a subset of fuzzy-number-valued continuous functions. We also introduce a method to measure the distance between a fuzzy-number-valued continuous function and a real-valued one. Then we prove the existence of the best approximation of a fuzzy-number-valued continuous function to the space of real-valued continuous functions by using the well-known Michael Selection Theorem.
\end{abstract}

\maketitle



\newtheorem{theorem}{Theorem}[section]
\newtheorem{lemma}[theorem]{Lemma}
\newtheorem{proposition}[theorem]{Proposition}
\newtheorem{corollary}[theorem]{Corollary}
\newtheorem{question}[theorem]{Question}

\theoremstyle{definition}
\newtheorem{definition}[theorem]{Definition}
\newtheorem{algorithm}[theorem]{Algorithm}
\newtheorem{conclusion}[theorem]{Conclusion}
\newtheorem{problem}[theorem]{Problem}

\theoremstyle{remark}
\newtheorem{remark}[theorem]{Remark}
\numberwithin{equation}{section}

\section{Introduction}

Approximation Theory originated from the necessity of approximating real-valued continuous
functions by simpler class of functions, such as trigonometric or algebraic polynomials and has attracted
the interest of many mathematicians for over a century. Among the most recognized results in this branch of Functional Analysis, we can mention the Stone-Weierstrass Theorem, Korovkin type results and the approximation of functions using neural networks.

More recently, all the above results have been also addressed in the context of fuzzy functions (see, e.g., \cite{anas}, \cite{huangwu}, \cite{FSS17}).

Another fundamental problem in Approximation Theory
is the study of best approximation in spaces of continuous functions, which has a long story with famous results by Chebyshev, Haar, Young, Remez, de la Vallée-Poussin who established the existence of best approximations, as well as characterized and estimated them. Here we could mention the problem of the uniform approximation of a scalar-valued
function continuous on a compact set by a set of other functions continuous on this compact set (see, e.g., \cite{prolacamb}, \cite{prola} or \cite{chen}).

The existence of the best approximation of a set-valued continuous function by vector-valued ones is another important topic in Approximation Theory and has been studied by several authors (see, e.g., \cite{Olech}, \cite{prolamachado}, \cite{lau}, \cite{cuenya} or \cite{kashi}).

In this paper we address these two problems of best approximation type in the context of fuzzy-number-valued continuous functions.

First, we study the best approximation of a fixed fuzzy-number-valued continuous function to a subset of fuzzy-number-valued continuous functions.

Secondly, we introduce a novel method to measure the distance between a fuzzy-number-valued function and a real-valued one. Then we prove the existence of the best approximation of a fuzzy-number-valued continuous function to the space of real-valued continuous functions by using the well-known Michael Selection Theorem.

\section{Preliminaries}


Let $F(\R)$ denote the family of all fuzzy subsets on the real
numbers $\R$ (see \cite{DK2}). For $\lambda\in\inter$ and a fuzzy set $u$, its $\lambda$-level set
is defined as

$$
[u]\sp{\lambda}:=\cj{x\in\R}{u(x)\geq \lambda}, \quad
\lambda\in ]0,1],
$$
and $[u]\sp{0}$ stands for the closure of $\cj{x\in\R}{u(x)>0}$.

The set of elements $u$ of
$F(\R)$ satisfying the following properties:

\begin{enumerate}
 \item $u$ is normal, i.e., there exists an $x\sb{0}\in\R$ with $u(x\sb{0})=1$;

\item $u$ is convex, i.e., $u(\lambda x + (1-\lambda)y)\geq \min
\left\{u(x),u(y)\right\}$  for all $x,y\in \R, \lambda\in\inter$;

\item $u$ is upper-semicontinuous;

\item $[u]\sp{0}$ is a compact set in $\R$,
\end{enumerate}
is called the fuzzy number space $\fn$ and contains the reals. 
 If $u\in\fn$, then it is known that $\lambda$-level set
$[u]\sp{\lambda}$ of $u$ is a compact interval for each
$\lambda\in\inter$. We will write $[u]\sp{\lambda}=\interu$.  

The following characterization of fuzzy numbers, which was proved by Goetschel and Voxman (\cite{GV}), is essential in the sequel:

\begin{theorem}\label{GW1}
Let $u\in\fn$ and $[u]\sp{\lambda}=\interu$, $\lambda\in\inter$.
Then the pair of functions $u\sp{-}(\lambda)$ and
$u\sp{+}(\lambda)$ has the following properties:

\begin{enumerate}
\item  $u\sp{+}(\lambda)$ is a bounded left continuous nonincreasing function
on $\mathopen (0,1\mathclose]$;

\item $u\sp{-}(\lambda)$ is a bounded left continuous nondecreasing function
on $\mathopen (0,1\mathclose]$;

\item $u\sp{-}(1)\leq u\sp{+}(1)$;

\item $u\sp{-}(\lambda)$ and $u\sp{+}(\lambda)$ are right continuous at
$\lambda=0$.
\end{enumerate}

\n Conversely, if two functions $\gamma (\lambda)$ and
$\nu(\lambda)$ satisfy the above conditions (i)-(iv), then there
exists a unique $u\in\fn$ such that
$[u]\sp{\lambda}=\mathopen[\gamma(\lambda),\nu(\lambda)\mathclose]$
for each $\lambda\in\inter$.
\end{theorem}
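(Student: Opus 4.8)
The plan is to treat the two implications separately: in the forward direction I derive the four regularity properties of the endpoint functions from the defining axioms of $\fn$, and in the converse I reconstruct $u$ from prescribed endpoint functions. For the forward direction, the starting observation is that the level sets are nested: if $0\le\lambda_1\le\lambda_2\le 1$ then $u(x)\ge\lambda_2$ forces $u(x)\ge\lambda_1$, so $[u]^{\lambda_2}\sub[u]^{\lambda_1}$. Reading this off on endpoints gives at once that $u^-$ is nondecreasing and $u^+$ is nonincreasing, which is the monotonicity content of (i) and (ii); boundedness of both follows because every level set lies inside the compact set $[u]^0$. Property (iii) is just the assertion $[u]^1\ne\emp$, which is exactly normality (axiom (1)).

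The continuity assertions I would extract from the identity $[u]^\lambda=\tbigcap_{\mu<\lambda}[u]^\mu$, valid for $\lambda\in(0,1]$ because $u(x)\ge\lambda$ holds precisely when $u(x)\ge\mu$ for every $\mu<\lambda$. Since each level set is a compact interval, the right-hand side is a decreasing intersection of compact intervals, whose left endpoint is $\sup_{\mu<\lambda}u^-(\mu)=\lim_{\mu\to\lambda^-}u^-(\mu)$ and whose right endpoint is $\inf_{\mu<\lambda}u^+(\mu)=\lim_{\mu\to\lambda^-}u^+(\mu)$; comparing with $[u]^\lambda=\interu$ yields the left continuity in (i) and (ii). For (iv) I would use $[u]^0=\cl\left(\tbigcup_{\lambda>0}[u]^\lambda\right)$, the nested union being an interval with endpoints $\lim_{\lambda\to0^+}u^-(\lambda)$ and $\lim_{\lambda\to0^+}u^+(\lambda)$; taking the closure identifies these limits with $u^-(0)$ and $u^+(0)$, i.e.\ right continuity at $0$.

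For the converse, given $\gamma,\nu$ satisfying (i)--(iv) I would define $\fu{u}{\R}{\inter}$ by $u(x):=\sup\cj{\lambda\in(0,1]}{\gamma(\lambda)\le x\le\nu(\lambda)}$, with the convention $\sup\emp=0$, and then verify the four axioms of $\fn$ together with the identity $[u]^\lambda=[\gamma(\lambda),\nu(\lambda)]$. Normality comes from (iii): any $x_0\in[\gamma(1),\nu(1)]$ satisfies $u(x_0)=1$. Convexity and the interval structure of the level sets follow because $\gamma$ nondecreasing and $\nu$ nonincreasing make the intervals $[\gamma(\lambda),\nu(\lambda)]$ nested, while compactness of the support $[\gamma(0),\nu(0)]$ follows from boundedness together with right continuity at $0$. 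Uniqueness is then automatic, since a fuzzy number is determined by its family of level sets.

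The step I expect to be the main obstacle is pinning down the level sets of the reconstructed $u$ exactly, i.e.\ showing $\cj{x}{u(x)\ge\lambda}=[\gamma(\lambda),\nu(\lambda)]$ rather than a possibly half-open or strictly smaller set, and with it upper-semicontinuity (axiom (3)). The inclusion $\supseteq$ is immediate from the definition of the supremum, but the reverse inclusion and the closedness of the level set both hinge on the left continuity in (i)--(ii): it is precisely left continuity that guarantees the defining supremum is attained. I would make this precise by taking, for $x$ with $u(x)=:\lambda_0>0$, a sequence $\lambda_n\uparrow\lambda_0$ with $\gamma(\lambda_n)\le x\le\nu(\lambda_n)$ and passing to the limit via left continuity to obtain $\gamma(\lambda_0)\le x\le\nu(\lambda_0)$; monotonicity of $\gamma$ and $\nu$ then upgrades this to $x\in[\gamma(\lambda),\nu(\lambda)]$ for every $\lambda\le\lambda_0$, which closes the argument.
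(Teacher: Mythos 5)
The paper does not actually prove this theorem: it is quoted as a known result of Goetschel and Voxman (their reference \cite{GV}), so there is no internal proof to compare yours against. Judged on its own merits, your argument is essentially the standard proof of this characterization, and it is correct. In the forward direction, the nestedness of level sets, the identity $[u]^{\lambda}=\bigcap_{0<\mu<\lambda}[u]^{\mu}$ for $\lambda\in(0,1]$, and the identity $[u]^{0}=\cl\bigl(\bigcup_{\lambda>0}[u]^{\lambda}\bigr)$ do yield (i)--(iv) exactly as you describe, because for monotone endpoint functions the one-sided limits coincide with the suprema/infima that these intersections and unions produce. In the converse direction you correctly isolate the crux: with $u(x):=\sup\{\lambda\in(0,1]:\gamma(\lambda)\le x\le\nu(\lambda)\}$, the inclusion $\{x:u(x)\ge\lambda\}\subseteq[\gamma(\lambda),\nu(\lambda)]$ and the closedness of the level sets (hence upper semicontinuity) are precisely where left continuity of $\gamma$ and $\nu$ enters, via attainment of the defining supremum, and your limiting argument along $\lambda_{n}\uparrow u(x)$ settles this. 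Two small points are left implicit but are needed and easy to supply: first, the intervals $[\gamma(\lambda),\nu(\lambda)]$ are nonempty for every $\lambda$, since by (iii) and monotonicity $\gamma(\lambda)\le\gamma(1)\le\nu(1)\le\nu(\lambda)$; second, the level-set identity at $\lambda=0$, namely $[u]^{0}=[\gamma(0),\nu(0)]$, requires combining right continuity at $0$ with monotonicity to identify $\gamma(0)=\inf_{\lambda>0}\gamma(\lambda)$ and $\nu(0)=\sup_{\lambda>0}\nu(\lambda)$, which also gives compactness of the support. With those remarks added, your sketch expands into a complete and correct proof of the cited theorem.
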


As usual, given $u,v\in \fn$ and $k\in \mathbb{R}$, we define the sum $u+v:=\interu+\interv$ and the product $ku:=k\interu$.
It is well-known that $\fn$ endowed with these two natural operations is not a vector space, not even $(\fn,+)$ is a group.

The fuzzy number space $\fn$ can be endowed with several metrics (see, e.g., \cite {DK2}) but perhaps the most used is the following:

\begin{definition}{\rm \cite{GV,DK2}}\label{GW2}
 For $u, v\in\fn$, 

$$
d\sb{\infty}(u,v):=\sup\sb{\lambda\in\inter}\max\left\{|u\sp{-}(\lambda)-v\sp{
- } (\lambda)|, |u\sp{+}(\lambda)-v\sp{+} (\lambda)|\right \}.
$$

\end{definition}

It is called the supremum
metric on $\fn$, and with this metric, $\fn$ is a complete metric
space. Indeed, $\R$ endowed with the Euclidean topology can be topologically identified
with the closed subspace $\tilde{R}=\cj{\tilde{x}}{x\in\R}$ of
$(\fn , d\sb{\infty})$ where
$\tilde{x}\sp{+}(\lambda)=\tilde{x}\sp{-}(\lambda)=x$ for all
$\lambda\in\mathopen[0,1\mathclose]$. We will always
consider $\fn$ equipped with the supremum metric.

\begin{proposition} \label{properties} (\cite[Proposition 2.3]{FSS17}) The metric space $(\fn, d_\infty)$ satisfies the following properties:
\begin{enumerate}
\item $d_\infty ( \sum_{i=1}^{m}u_{i}, \sum_{i=1}^{m}v_{i}) \le \sum_{i=1}^{m}d_{\infty} (u_i, v_i)$ where $u_i,v_i\in \mathbb{E}^1$ for $i=1,...,m$.
\item  $d_\infty ( ku, kv) = k d_\infty (u, v)$ where $u,v\in \mathbb{E}^1$ and $k>0$.
\item $d_\infty (ku,\mu u ) = \mid k - \mu  \mid d_\infty (u,0),$ where $u \in \mathbb{E}^1$, $ k \ge 0$ and $\mu \ge 0$.
\item $d_\infty (ku,\mu v ) \le \mid k - \mu  \mid d_\infty (u,0) + \mu d_\infty ( u,v),$ where $u,v \in \mathbb{E}^1$, $ k \ge 0$ and $\mu \ge 0$.
\end{enumerate}
\end{proposition}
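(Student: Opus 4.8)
The plan is to reduce every statement to elementary estimates on the endpoint functions $u\sp{-}(\lambda)$ and $u\sp{+}(\lambda)$, using the Goetschel--Voxman representation (Theorem \ref{GW1}) to pass freely between a fuzzy number and the pair of its level-set endpoints. The one fact I would isolate at the outset is that, because all the scalars appearing in the statement are nonnegative, the arithmetic operations act linearly on endpoints: for $k\ge 0$ one has $(ku)\sp{-}(\lambda)=k\,u\sp{-}(\lambda)$ and $(ku)\sp{+}(\lambda)=k\,u\sp{+}(\lambda)$, while $(u+v)\sp{\pm}(\lambda)=u\sp{\pm}(\lambda)+v\sp{\pm}(\lambda)$. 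Here it matters that $k\ge 0$, so that multiplication does not reverse the orientation of the interval $\interu$; for a negative scalar the two endpoints would swap, which is exactly why the statements are phrased for nonnegative constants. With this in hand, $d_\infty$ becomes the supremum over $\lambda\in\inter$ of the maximum of two absolute values of real numbers, and the properties follow from standard inequalities.

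For part (1), I would write $(\sum_i u_i)\sp{\pm}(\lambda)=\sum_i u_i\sp{\pm}(\lambda)$ and apply the triangle inequality for $|\cdot|$ together with the elementary bounds $|\sum_i a_i|\le \sum_i\max\{|a_i|,|b_i|\}$ and $|\sum_i b_i|\le \sum_i\max\{|a_i|,|b_i|\}$, where $a_i=u_i\sp{-}(\lambda)-v_i\sp{-}(\lambda)$ and $b_i=u_i\sp{+}(\lambda)-v_i\sp{+}(\lambda)$. Taking the maximum and then the supremum over $\lambda$, and using that the supremum of a sum is at most the sum of the suprema, yields the stated inequality. For part (2), the identity $(ku)\sp{\pm}(\lambda)-(kv)\sp{\pm}(\lambda)=k\bigl(u\sp{\pm}(\lambda)-v\sp{\pm}(\lambda)\bigr)$ lets me factor $k>0$ out of each absolute value, then out of the maximum, and finally out of the supremum, giving $d_\infty(ku,kv)=k\,d_\infty(u,v)$. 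Part (3) is the same computation with $v$ replaced by $u$ and the two scalars $k,\mu\ge 0$: since $(ku)\sp{\pm}(\lambda)-(\mu u)\sp{\pm}(\lambda)=(k-\mu)u\sp{\pm}(\lambda)$, the common factor $|k-\mu|$ comes out, and the remaining supremum is exactly $d_\infty(u,0)$ because the endpoints of the zero fuzzy number vanish.

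Finally, part (4) I would deduce from the triangle inequality for $d_\infty$ rather than by a fresh endpoint computation: inserting $\mu u$ gives $d_\infty(ku,\mu v)\le d_\infty(ku,\mu u)+d_\infty(\mu u,\mu v)$, and then (3) evaluates the first term as $|k-\mu|\,d_\infty(u,0)$ while (2) evaluates the second as $\mu\,d_\infty(u,v)$. The only point requiring a word of care is the degenerate case $\mu=0$, where (2) does not directly apply; there $\mu u=\mu v=0$ forces the second term to vanish, so the inequality still holds. I do not expect any genuine obstacle here: the whole proposition is a matter of transporting familiar supremum-norm inequalities through the Goetschel--Voxman correspondence, and the one thing one must not forget is to keep track of the nonnegativity hypotheses that guarantee the endpoints transform without swapping.
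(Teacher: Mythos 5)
The paper itself gives no proof of this proposition: it is imported verbatim from \cite[Proposition 2.3]{FSS17}, so there is no in-paper argument to compare yours against. Judged on its own, your proof is correct, and it is the natural (and almost certainly the intended) one. The reduction to endpoint functions via Theorem \ref{GW1} is legitimate because the paper defines $u+v$ and $ku$ levelwise, and for $k\ge 0$ scalar multiplication acts as $(ku)^{\pm}(\lambda)=k\,u^{\pm}(\lambda)$ without swapping endpoints --- a point you rightly isolate as the reason the hypotheses insist on nonnegative scalars. Parts (1)--(3) then follow from the triangle inequality for $|\cdot|$ and from factoring constants through $\max$ and $\sup$ (together with $\sup$ of a sum being at most the sum of the $\sup$'s), with $d_\infty(u,0)$ appearing in (3) because the crisp zero has identically vanishing endpoint functions. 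Your derivation of (4) by inserting $\mu u$ into the triangle inequality and invoking (3) and (2) is clean, and your separate treatment of the degenerate case $\mu=0$ --- where (2), stated only for $k>0$, cannot be applied, but $\mu u=\mu v$ is the crisp zero so the second term vanishes --- closes the only loophole such an argument could have. No gaps.
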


%
%
%

\bigskip

In the space of continuous functions defined between the compact Hausdorff space $K$ and the metric space $(\mathbb{E}^1, d_\infty)$, i.e., $C(K,\mathbb{E}^1)$, we will consider the following metric:
$$D(f,g) =  \sup_{ t \in K} d_\infty(f(t),g(t)),$$
which induces the uniform convergence topology on $C(K,\mathbb{E}^1)$.


%
%


Let us next introduce a basic tool for our purposes.

\begin{definition} \label{multi} Let $W$ be a nonempty subset of $C(K, \mathbb{E}^1)$. We define
$$Conv(W):=\{ \varphi \in C(K, [0,1]): \varphi f + (1 - \varphi) g \in W \hspace{0,2cm} for \hspace{0,2cm} all \hspace{0,2cm} f,g \in W \}.$$
\end{definition}

\begin{proposition} \label{multiplier} (\cite[Proposition 3.2]{FSS17}) Let $W$ be a nonempty subset of $C(K, \mathbb{E}^1)$. Then we have:
\begin{enumerate}
\item $\phi \in Conv(W)$ implies that $1 - \phi \in Conv(W)$.
\item If $\phi, \varphi \in Conv(W)$, then $\phi \cdot \varphi \in Conv(W)$.
\end{enumerate}
\end{proposition}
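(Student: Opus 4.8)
The plan is to verify both items directly from Definition \ref{multi}, using only the fuzzy arithmetic recorded in Proposition \ref{properties} together with the observation that every scalar appearing below lies in $\inter$, and is therefore nonnegative. Note first that in each case the resulting multiplier is manifestly continuous and $\inter$-valued: if $\phi\in C(K,\inter)$ then so is $1-\phi$, and if $\phi,\varphi\in C(K,\inter)$ then so is the pointwise product $\phi\cdot\varphi$. Thus the only content to check is the defining membership condition.

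For (1) the argument is pure symmetry. Given $\phi\in Conv(W)$ and arbitrary $f,g\in W$, I would apply the defining property of $\phi$ to the swapped pair $(g,f)$, which yields $\phi g+(1-\phi)f\in W$. Since addition in $\fn$ is commutative and $(1-\phi)f+(1-(1-\phi))g=(1-\phi)f+\phi g$, this is exactly the combination required to witness $1-\phi\in Conv(W)$. No fuzzy-arithmetic subtlety arises here.

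For (2) the key idea is a two-step substitution. Fix $f,g\in W$ and set $h:=\varphi f+(1-\varphi)g$; since $\varphi\in Conv(W)$ we have $h\in W$. Since $\phi\in Conv(W)$ and $h,g\in W$, the combination $\phi h+(1-\phi)g$ again lies in $W$, and it remains only to identify this element, pointwise on $K$, with $(\phi\varphi)f+(1-\phi\varphi)g$. This is where the main (though mild) obstacle lies: because $(\fn,+)$ is not a vector space, the required manipulations cannot be taken for granted but must be justified from the level-set description of the operations. Concretely, at each $t\in K$ I would expand $\phi(t)h(t)+(1-\phi(t))g(t)$ and regroup using three facts valid for nonnegative scalars $a,b$ and $u,v\in\fn$: distributivity $a(u+v)=au+av$, associativity $a(bu)=(ab)u$, and the identity $au+bu=(a+b)u$. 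Each follows at once by passing to $\lambda$-level sets, where the operations reduce to the Minkowski sum and dilation of compact intervals; I would emphasize that $au+bu=(a+b)u$ genuinely requires $a,b\ge 0$, which is guaranteed since $\phi,\varphi$ take values in $\inter$.

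Carrying out the regrouping, the coefficient of $g(t)$ collapses to $\phi(t)(1-\varphi(t))+(1-\phi(t))=1-\phi(t)\varphi(t)$, while the coefficient of $f(t)$ is $\phi(t)\varphi(t)$. Hence $\phi h+(1-\phi)g=(\phi\varphi)f+(1-\phi\varphi)g$ as elements of $C(K,\fn)$, and since the left-hand side belongs to $W$, so does the right-hand side. As $f,g\in W$ were arbitrary, this gives $\phi\varphi\in Conv(W)$. The only genuinely delicate point in the whole argument is the legitimacy of the scalar regrouping in $\fn$; everything else is formal.
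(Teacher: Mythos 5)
Your proof is correct. Note that the paper itself does not prove this proposition at all: it is quoted verbatim from \cite[Proposition 3.2]{FSS17}, so there is no in-paper argument to compare against. Your verification is the natural (and almost certainly the intended) one: item (1) by swapping the roles of $f$ and $g$, and item (2) by the two-step substitution $h:=\varphi f+(1-\varphi)g\in W$ followed by $\phi h+(1-\phi)g\in W$ and a regrouping of coefficients. You also correctly isolated the one point where fuzzy arithmetic could cause trouble --- the identity $au+bu=(a+b)u$ in $\fn$ is valid only for $a,b\ge 0$, which holds here because $\phi,\varphi$ take values in $\inter$ --- and justified it at the level of $\lambda$-level sets, where it reduces to dilation and Minkowski addition of compact intervals. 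Nothing is missing.
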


%
%
%
%

\begin{definition}
It is said that $M \subset C(K, [0, 1])$ separates the points of $K$ if given $s, t \in K$, there exists $\phi \in M$ such that $\phi(s) \not= \phi(t)$.
\end{definition}

%
%

\begin{lemma}\label{pat3} (\cite[Lemma 3.6]{FSS17}) Let $ W \subseteq C(K, \mathbb{E}^1)$. If $Conv(W)$ separates the points of $K$, then, given $x_0 \in K$ and an open neighborhood $N$ of $x_0$, there exists a neighborhood $U$ of $x_0$ contained in $N$ such that for all $0 < \delta < \frac{1}{2}$, there is $\varphi \in Conv(W)$ such that

\begin{enumerate}
\item $\varphi (t) > 1 - \delta,$ for all $t \in U$;
\item $\varphi (t) < \delta,$ for all $t \notin N$.
\end{enumerate}

\end{lemma}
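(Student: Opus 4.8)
The plan is to build the multiplier $\varphi$ directly from elements of $Conv(W)$, using the separation hypothesis together with the closure properties in Proposition \ref{multiplier} (that $Conv(W)$ is closed under $\phi\mapsto 1-\phi$ and under products). First I would fix $x_0\in K$ and an open neighborhood $N$ of $x_0$. The strategy is the classical Stone--Weierstrass-type ``bump function'' construction: for each point $t\notin N$, separation gives some $\psi\in Conv(W)$ with $\psi(x_0)\neq\psi(t)$, which after normalizing (replacing $\psi$ by a suitable affine combination, and possibly by $1-\psi$) yields an element $h_t\in Conv(W)$ with $h_t(x_0)$ close to $1$ and $h_t(t)$ close to $0$. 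The key point is that $Conv(W)\subseteq C(K,[0,1])$, so these functions are genuinely $[0,1]$-valued and continuous, and Proposition \ref{multiplier}(ii) lets me multiply finitely many of them together while staying inside $Conv(W)$.

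**The compactness step.** Next I would exploit that $K\setminus N$ is closed in the compact Hausdorff space $K$, hence compact. For each $t\in K\setminus N$, by continuity $h_t$ is small on an open neighborhood of $t$; covering $K\setminus N$ by finitely many such neighborhoods and multiplying the corresponding $h_{t_1},\dots,h_{t_n}$ gives a single $\varphi_0\in Conv(W)$ that is uniformly small on all of $K\setminus N$ while remaining large (close to $1$) at $x_0$. By continuity of $\varphi_0$ at $x_0$, the set $U:=\{t: \varphi_0(t)>1-\delta\}$ is an open neighborhood of $x_0$, and I would shrink it to lie inside $N$.

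**Handling the uniform $\delta$ and the neighborhood $U$.** The delicate point is that the conclusion demands a \emph{single} neighborhood $U\subseteq N$ of $x_0$ that works \emph{simultaneously for all} $0<\delta<\tfrac12$, with a possibly different $\varphi$ for each $\delta$. So I would first produce, by the construction above, one fixed $\varphi_0\in Conv(W)$ with $\varphi_0(x_0)$ strictly greater than some threshold and $\varphi_0$ small off $N$, and use it to define $U$ once and for all. Then for a given $\delta$ I would amplify the contrast: raising $\varphi_0$ to a high power $\varphi_0^k$ (which stays in $Conv(W)$ by iterating Proposition \ref{multiplier}(ii)) drives values that were $<1$ down toward $0$ and keeps values near $1$ large, so for $k$ large enough $\varphi:=\varphi_0^k$ satisfies both $\varphi>1-\delta$ on $U$ and $\varphi<\delta$ off $N$.

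**Main obstacle.** The hardest part is the bookkeeping that makes $U$ independent of $\delta$: I must arrange the construction so that the \emph{same} $U$ serves every $\delta$, which forces me to fix the geometry (the choice of $U$ and the base multiplier $\varphi_0$) before quantifying over $\delta$ and to obtain the $\delta$-dependence purely by sharpening $\varphi_0$ via powers. The algebraic constraints of $Conv(W)$ are mild---only closure under complementation and products are available, not general lattice operations---so every manipulation must be expressible through these two operations, which is exactly why the power trick $\varphi_0^k$ and finite products, rather than maxima or minima, are the right tools.
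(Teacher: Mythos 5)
Your skeleton (separate $x_0$ from each point of the compact set $K\setminus N$, use compactness to reduce to finitely many functions, combine them by products, and fix $U$ \emph{before} quantifying over $\delta$) is the right one, but the two steps that carry all the analytic weight fail as written, and both failures have the same root cause. First, the ``normalization'' step is illegal: $Conv(W)$ is closed only under $\varphi\mapsto 1-\varphi$ and under products (Proposition \ref{multiplier}); it is not closed under affine maps $s\mapsto (s-a)/(b-a)$, and it is not even known to contain constants (that would force $W$ to be convex, which is not assumed). So from a separating $\psi$ with $\psi(x_0)\neq\psi(t)$ you cannot manufacture $h_t$ with $h_t(x_0)$ near $1$ and $h_t(t)$ near $0$ ``by a suitable affine combination''; producing such an $h_t$ from the two allowed operations is precisely the nontrivial content of the lemma, and your own closing paragraph (which correctly observes that only complementation and products are available) contradicts this step.

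Second, the amplification by plain powers is mathematically false: powers drive \emph{every} value strictly below $1$ to $0$, not just the small ones. Concretely, once $U$ and $\varphi_0$ are fixed, let $\eta>0$ be such that $\inf_U\varphi_0=1-\eta<1$ and let $\theta=\sup_{K\setminus N}\varphi_0<1$. To get $\varphi_0^k<\delta$ off $N$ you must take $k\to\infty$ as $\delta\to 0$, but then $\inf_U\varphi_0^k=(1-\eta)^k\to 0$, so the requirement $\varphi_0^k>1-\delta>\tfrac12$ on $U$ fails for all small $\delta$. The classical repair --- and the mechanism behind the cited Lemma 3.6 of \cite{FSS17}, in the tradition of Prolla's Stone--Weierstrass arguments --- is the two-exponent iterate: for $\phi\in Conv(W)$ and integers $k,m$ one has $(1-\phi^k)^m\in Conv(W)$, and Bernoulli-type inequalities give
\begin{equation*}
(1-\phi^k)^m\;\ge\;1-ma^k \ \text{ where }\phi\le a,
\qquad
(1-\phi^k)^m\;\le\;\frac{1}{1+mb^k} \ \text{ where }\phi\ge b,
\end{equation*}
so choosing $a<c<b$ and $m\approx c^{-k}$ makes the first bound tend to $1$ and the second to $0$ as $k\to\infty$. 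With this tool the proof runs: for each $y\in K\setminus N$ pick $\phi_y\in Conv(W)$ with (after replacing $\phi_y$ by $1-\phi_y$ if necessary) $\phi_y(x_0)<a_y<b_y<\phi_y(y)$, set $U_y=\{\phi_y<a_y\}$ and $V_y=\{\phi_y>b_y\}$; cover $K\setminus N$ by finitely many $V_{y_1},\dots,V_{y_n}$; put $U=N\cap U_{y_1}\cap\dots\cap U_{y_n}$, which is independent of $\delta$; and for a given $\delta$ take $\varphi=\prod_{i=1}^n\bigl(1-\phi_{y_i}^{k_i}\bigr)^{m_i}$ with exponents chosen so that each factor exceeds $(1-\delta)^{1/n}$ on $U_{y_i}$ and is below $\delta$ on $V_{y_i}$. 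Your architecture of ``fix $\varphi_0$ and $U$ once, then sharpen per $\delta$'' can also be salvaged, but only by replacing $\varphi_0^k$ with the iterate $\bigl(1-(1-\varphi_0)^k\bigr)^m$, again via Bernoulli; one exponent alone cannot do it.
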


\section{Best approximation for subspaces of $C(K,\mathbb{E}^1)$}

\begin{definition}
Let $A$ be a subspace of $C(K,\mathbb{E}^1)$ and let $f\in C(K,\mathbb{E}^1)$. We define

$$d(f,A):=\inf_{g\in A}\{\sup_{x\in K}d_{\infty}(f(x),g(x))\}=\inf_{g\in A}\{D(f,g)\}$$

$$d_x(f,A):=\inf_{g\in A}\{d_{\infty}(f(x),g(x))\}$$
\end{definition}

\begin{theorem} \label{main} Let $W$ be a subspace of $C(K,\mathbb{E}^1)$ and assume that $Conv(W)$ separates points.
For each $f \in C(K,\mathbb{E}^1)$, we have
\[d(f,W)=d_x(f,W)\]
for some $x\in K$.
\end{theorem}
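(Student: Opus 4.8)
The plan is to reduce the statement to showing that the pointwise defect $\rho(x) := d_x(f,W)$ attains the value $d(f,W)$, and then to rule out the strict inequality $\sup_x \rho(x) < d(f,W)$ by a gluing argument built from $Conv(W)$. First I would record the trivial bound $\rho(x) \le d(f,W)$ for every $x \in K$: since $d_\infty(f(x),g(x)) \le D(f,g)$ for each $g \in W$, taking infima over $g$ gives $\rho(x) \le d(f,W)$. Next, for each fixed $g \in W$ the map $x \mapsto d_\infty(f(x),g(x))$ is continuous, so $\rho$, being an infimum of continuous functions, is upper semicontinuous on the compact space $K$ and therefore attains its maximum at some $x_0 \in K$. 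Setting $\alpha := \rho(x_0) = \max_{x} \rho(x) \le d(f,W)$, it suffices to prove $\alpha = d(f,W)$, for then $x_0$ is the required point.

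Suppose, toward a contradiction, that $\alpha < d(f,W)$, and fix $\beta$ with $\alpha < \beta < d(f,W)$. For every $x \in K$ we have $\rho(x) \le \alpha < \beta$, so there is $g_x \in W$ with $d_\infty(f(x),g_x(x)) < \beta$; by continuity the set $V_x := \{t \in K : d_\infty(f(t),g_x(t)) < \beta\}$ is an open neighborhood of $x$. By compactness of $K$ I would extract a finite subcover $V_1,\dots,V_n$ with associated approximants $g_1,\dots,g_n \in W$, and set $B := \max_{1\le i\le n} D(f,g_i) < \infty$. The goal is then to glue the $g_i$ into a single $g \in W$ with $D(f,g) < d(f,W)$, contradicting the definition of $d(f,W)$ as an infimum.

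For the gluing I would manufacture a convex-combination partition of unity inside $Conv(W)$ subordinate to $\{V_i\}$. Lemma \ref{pat3} provides, for any prescribed accuracy $\delta$, members of $Conv(W)$ that exceed $1-\delta$ on a chosen neighborhood and stay below $\delta$ outside a given open set; combining finitely many such bumps by means of the product and complementation closure of $Conv(W)$ (Proposition \ref{multiplier}) yields multipliers $\varphi_1,\dots,\varphi_n \in Conv(W)$ that are within $\delta$ of the indicator functions of a cover $\{C_i\}$ shrinking $\{V_i\}$. I would then form $g$ by nesting binary convex combinations, $g := \varphi_1 g_1 + (1-\varphi_1)\bigl(\varphi_2 g_2 + \cdots + (1-\varphi_{n-1})g_n\bigr)$, which lies in $W$ because each inner step is a $Conv(W)$-combination of two elements of $W$. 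Writing the resulting weights as $w_i := \varphi_i \prod_{j<i}(1-\varphi_j)$ (with $w_n := \prod_{j<n}(1-\varphi_j)$), one has $\sum_i w_i \equiv 1$ and $g = \sum_i w_i g_i$.

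The final estimate uses that $\sum_i w_i \equiv 1$ forces $f = \sum_i w_i f$ (at the level of $\lambda$-cuts, $\sum_i w_i(t)[f(t)]^\lambda = [f(t)]^\lambda$), so Proposition \ref{properties}(1)-(2) give $d_\infty(f(t),g(t)) \le \sum_i w_i(t)\, d_\infty(f(t),g_i(t))$ for every $t$. Here the main obstacle lies: I must control the terms for which $t \notin V_i$, where $d_\infty(f(t),g_i(t))$ may be as large as $B$. By construction $w_i(t) \le \varphi_i(t) < \delta$ on the complement of $V_i$, so these bad terms contribute at most $n\delta B$, while the remaining terms contribute at most $\beta \sum_i w_i(t) = \beta$; hence $D(f,g) \le \beta + n\delta B$. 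Choosing $\delta$ small enough that $\beta + n\delta B < d(f,W)$ — possible since $\beta < d(f,W)$ — produces $g \in W$ with $D(f,g) < d(f,W)$, the desired contradiction. The delicate point throughout is the simultaneous arrangement of the multipliers so that each core $C_i$ still covers $K$ while $\varphi_i$ remains negligible off $V_i$, which is exactly what Lemma \ref{pat3} together with Proposition \ref{multiplier} is designed to deliver.
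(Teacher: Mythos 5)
Your proposal follows essentially the same strategy as the paper's proof: reduce the theorem to showing $d(f,W)=\sup_{x\in K}d_x(f,W)$, get attainment from compactness, and prove the nontrivial inequality by gluing local approximants via nested convex combinations whose coefficients come from $Conv(W)$ (Lemma \ref{pat3} plus Proposition \ref{multiplier}). Your variations are sound and in places cleaner than the paper: you argue by contradiction with an intermediate value $\beta$ instead of the paper's direct $3\varepsilon$ estimate; you use upper semicontinuity of $\rho(x)=d_x(f,W)$ for attainment, which is rigorous, whereas the paper simply asserts continuity of this function; and you normalize the last weight so that $\sum_i w_i\equiv 1$, which (using $\sum_i w_i(t)[f(t)]^\lambda=[f(t)]^\lambda$) eliminates the paper's extra error term $\left|1-\sum_i\psi_i(x)\right| d_\infty(f(x),0)$.

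Two points in your write-up need repair, though both are fixable. First, the order of operations: you extract the finite subcover $V_1,\dots,V_n$ \emph{first} and then ask Lemma \ref{pat3} for bumps adapted to a shrunken cover $\{C_i\}$. But the lemma only produces a neighborhood $U$ (not chosen by you) of a \emph{specified point} inside a specified open set; applied to $(x_i,V_i)$ it gives $U_i\ni x_i$, and these $U_i$ need not cover $K$, so the cover $\{C_i\}$ you postulate is not directly delivered. The paper avoids this by applying Lemma \ref{pat3} at every point of $K$ \emph{before} invoking compactness and extracting the finite subcover from the resulting small neighborhoods; you should do the same (or run a second compactness extraction over points $y\in K$, pairing each resulting bump with the approximant $g_{i(y)}$ of a $V_{i(y)}$ containing $y$). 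Second, your justification of the bad-term bound, ``$w_i(t)\le\varphi_i(t)<\delta$ off $V_i$,'' fails for $i=n$: your $w_n=\prod_{j<n}(1-\varphi_j)$ involves no bump $\varphi_n$ at all. The bound $w_n(t)<\delta$ for $t\notin V_n$ is still true, but for a different reason: such a $t$ lies in some $C_{i_0}$ with $i_0<n$ (since $C_n\subset V_n$), whence $w_n(t)\le 1-\varphi_{i_0}(t)<\delta$. With these two repairs your argument is complete and matches the paper's in substance.
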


\begin{proof}
We shall first show that $d(f,W)=\sup_{x\in K}d_x(f,W)$. It is apparent that $d(f,W)\ge \sup_{x\in K}d_x(f,W)$ since $d(f,W)\ge d_x(f,W)$ for each $x\in K$.
Let us prove that $d(f,W)\le \sup_{x\in K}d_x(f,W)$.

To this end, fix $\varepsilon > 0$. Given $x'\in K$, we can find $f_{x'}\in W$ such that $d_{\infty}(f(x'),f_{x'}(x'))<\sup_{x\in K}d_x(f,W)+\varepsilon$.
Let us define the following open neighborhood of $x'$:
$$N(x'):= \{ t \in K: d_\infty (f(t), f_{x'}(t)) < \sup_{x\in K}d_x(f,W)+\varepsilon \}.$$
Let  $U(x')$ be an open neighborhood of $x'$ satisfying the properties in Lemma \ref{pat3}.

Since $K$ is compact, there exist finitely many $\{ x_1,	\ldots , x_m \}$ in $K$ such that $K \subset U(x_1) \cup \ldots \cup U(x_m)$.
Choose $\delta>0$ such that $\delta<min(1, \frac{\varepsilon}{km})$, where
\[k:=\max \{D(f,0), D(f, f_{x_1}),...D(f, f_{x_m})\}.\]
By Lemma \ref{pat3}, we know that there exist $\phi_1, \cdots, \phi_m \in Conv(W)$ such that for all $i=1, \ldots, m$,
\begin{enumerate}
\item[(i)] $\phi_i (t) > 1 - \delta,$ for all $t \in U(x_i)$;
\item[(ii)] $0 \le \phi_i (t) < \delta,\quad if  \enspace  t \notin N(x_i)$.
\end{enumerate}

Let us define the functions
\newline
$\psi_1 := \phi_1$,
\newline
$\psi_2 := ( 1 - \phi_1) \phi_2$,
\newline
$\vdots$
\newline
$\psi_m := ( 1 - \phi_1)( 1 - \phi_2) \cdots ( 1 - \phi_{m-1}) \phi_{m}$.

By Proposition \ref{multiplier}, we know that $\psi_i \in Conv(W)$ for all $i=1,\ldots,m$.
Next we claim that
$$\psi_1+\ldots+\psi_j= 1 - ( 1 - \phi_1)( 1 - \phi_2) \cdots ( 1 - \phi_j),$$ $j=1,\ldots,m .$
Indeed, it is clear that
$$\psi_1 + \psi_2 = \phi_1  + (1 - \phi_1) \phi_2= 1-(1-\phi_1)\cdot(1-\phi_2).$$
We proceed by induction. Assume that the result is true for a certain $j\in\{4,...,m-1\}$ and let us check
$$\psi_1+\ldots+\psi_j+\psi_{j+1}= 1 - ( 1 - \phi_1)( 1 - \phi_2) \cdots ( 1 - \phi_j) ( 1 - \phi_{j+1}).$$
Namely,
$$\psi_1+\ldots+\psi_j+\psi_{j+1}=$$
$$=1 - ( 1 - \phi_1)( 1 - \phi_2) \cdots ( 1 - \phi_j)+( 1 - \phi_1)( 1 - \phi_2) \cdots ( 1 - \phi_{j}) \phi_{j+1}=$$
$$=1 - ( 1 - \phi_1)( 1 - \phi_2) \cdots ( 1 - \phi_j) ( 1 - \phi_{j+1}),$$
as was to be checked.

Fix $x_0\in K$. Then there is some $i_0\in \{1, \ldots, m\}$ such that $x_0\in U(x_{i_0})$. Hence $\phi_{i_0}(x_0)> 1 - \delta$ and consequently,
\[1\ge \sum_{i=1}^{m}\psi_i(x_0)=1-(1 - \phi_{i_0}(x_0)) \prod_{i \not= i_0} ( 1 - \phi_i(x_0))> 1- \delta.\]

Furthermore, we clearly infer
\begin{equation}\label{1}
\psi_i(t) < \delta \hspace{0.1in} {\rm for} \hspace{0.04in} {\rm all} \hspace{0.04in} t \notin N(x_{i_0}), \hspace{0.04in}i = 1, \ldots, m.
\end{equation}

Let
\begin{equation}\label{g}
h := \psi_1 f_{x_1} + \psi_2 f_{x_2} + \ldots + \psi_m f_{x_m}.
\end{equation}
It seems apparent that
$$h= \phi_1 f_{x_1} + ( 1 - \phi_1 ) [\phi_2 f_{x_2} + (1 - \phi_2) [\phi_3 f_{x_3} + \cdots + (1 - \phi_{m-1}) [\phi_m f_{x_m} \cdots]].$$
Therefore, $h \in W$ since $\phi_i\in Conv(W)$ for $i=1,...,m$ (see Definition \ref{multi}).

From Proposition \ref{properties}, we know that, given $x \in K$,
$$d_\infty (f(x), h(x)) \le d_\infty \left(f(x),\sum^m_{i=1} \psi_i (x)f(x)\right) + d_\infty \left(\sum^m_{i=1} \psi_i (x)f(x),h(x)\right)\le $$
$$\le \left|1-\sum^m_{i=1}\psi_i (x)\right|d_\infty \left(f(x), 0)\right) + \sum^m_{i=1}\psi_i (x) d_\infty \left( f(x),f_{x_i}(x)\right).$$

On the one hand, $\left|1-\sum^m_{i=1}\psi_i (x)\right|d_\infty \left(f(x), 0)\right)\le \delta D(f,0) \le \epsilon$.

On the other hand, let $$I_x=\{ 1 \le i \le m: x \in N(x_i) \}$$ and $$J_x=\{ 1 \le i \le m: x \notin N(x_i) \}.$$
Then, for all $i \in I_x$, we have
$$\psi_i (x) d_\infty (f(x), f_{x_i}(x)) \le \psi_i(x) (\sup_{x\in K}d_x(f,W)+\varepsilon)\le \sup_{x\in K}d_x(f,W)+\varepsilon$$
and, for all $i \in J$, the inequality (\ref{1}) yields
$$\psi_i(x) d_\infty (f(x),f_{x_i}(x)) \le \delta d_\infty (f(x),f_{x_i}(x))\le \delta D(f, f_{x_i})\le \delta k.$$

From the above two paragraphs, we can infer
$$d_\infty (f(x), h(x)) \le \epsilon + \sup_{x\in K}d_x(f,W)+\varepsilon + \delta k m \le \sup_{x\in K}d_x(f,W)+3\varepsilon$$
and, since $x\in K$ is arbitrary,
$$D(f,h) \le \sup_{x\in K}d_x(f,W)+3\varepsilon.$$
As a consequence, we deduce that
$$d(f,W)=\inf_{g\in W}\{D(f,g)\}\le \sup_{x\in K}d(f_x,W_x).$$

Finally, we can define a continuous function $\gamma: K\longrightarrow \mathbb{R}$ as
\[\gamma(x):=\inf_{g\in W}\{d_{\infty}(f(x),g(x))\}.\]
Since $K$ is compact, we know that $\gamma$ attains its supremum at some $x'\in K$. Hence, we can write
\[d(f,W)=d_{x'}(f,W).\]

\end{proof}

\section{Best approximation with respect to real-valued continuous functions}

In Approximation Theory, a natural question is when we can approximate a set-valued function by continuous real-valued functions. In the classical setting, Cellina's Theorem (\cite{cellina}) is the fundamental result (see also \cite{Beer}, \cite{blasi}, \cite{hola}).  In this section we introduce a method to measure the distance between a fuzzy-number-valued function and a real-valued one. Then we will prove the existence of the best approximation of a fuzzy-number-valued continuous functions to the space of real-valued continuous functions.

\begin{definition}\label{def:dist1}
Let $f \in C(K,\mathbb{E}^1)$ and let $F\in C(K)$. The distance between $f$ and $F$ can be defined as
\[D(f,F):=\sup_{x\in K}\left\{\sup\{|F(x)-t|:t\in [f(x)^-(1),f(x)^+(1)]\}\right\}.\]
\end{definition}

\begin{remark}
Indeed we are evaluating the distance between $f$ and $F$ at the core of each fuzzy number $f(x)$. We can define a similar distance at each level $\lambda\in[0,1]$ as follows

\[
D_{\lambda}(f,F):=\sup_{x\in K}\left\{\sup\{|F(x)-t|:t\in I_{\lambda}:=[f(x)^-(\lambda),f(x)^+(\lambda)]\}\right\}.
\]
and take $D(f,F)=\inf_{\lambda\in[0,1]}D_{\lambda}(f,F)$.
But, being the intervals $I_\lambda$ a nonincreasing family, it implies that the distances form a nondecreasing family as well, that is,
\[
D_{\lambda}(f,F)\geq D_{\eta}(f,F) \quad \text{if $0\leq\lambda\leq \eta\leq 1$}.
\]
Then $D(f,F)=D_{1}(f,F)$ which coincides with Definition~\ref{def:dist1}.
To avoid the case when the core reduces to a real number, we could define
$D(f,F)=\sup_{\lambda\in[0,1]}D_{\lambda}(f,F)=D_{0}(f,F)$, which means we are evaluating the distance at the support of each $f(x)$.
In that case, the following results could be  adapted easily to that situation.
\end{remark}

\begin{definition}
Let $f \in C(K,\mathbb{E}^1)$. We can define the distance between $f$ and $C(K)$ as
\[D(f,C(K)):=\inf_{F\in C(K)} D(f,F).\]
\end{definition}

\begin{definition}
Let $f \in C(K,\mathbb{E}^1)$ and $x\in K$. We can define
\[
rad(x,f):=\inf_{\alpha\in \mathbb{R}}\left\{\sup\{ |\alpha-\beta| \ :\ \beta\in [f(x)^-(1),f(x)^+(1)]\}\right\}
\]

It is clear that $rad(x,f)$ turns out to be the radius of the interval $[f(x)^-(1),f(x)^+(1)]$.
\end{definition}

\begin{definition}
Let $f \in C(K,\mathbb{E}^1)$. We define the {\em radius} of $f$ as
\[
rad(f):=\sup_{x\in K}rad(x,f)
\]
\end{definition}

\begin{remark}
From these definitions we infer easily that
\[D(f,F) \ge rad(f)\]
for all $F\in C(K)$. Hence
\[D(f,C(K)) \ge rad(f).\]
\end{remark}

%

\begin{theorem} Let $f \in C(K,\mathbb{E}^1)$. Then there exists a function $F_0\in C(K)$ such that
\[D(f,C(K))=D(f,F_0).\]
\end{theorem}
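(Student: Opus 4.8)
The plan is to reduce the problem to finding a continuous real-valued selection of a suitable interval-valued map on $K$ and then invoke the Michael Selection Theorem, exactly as the abstract advertises.

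First I would record that the two endpoint functions of the core, $x\mapsto f(x)^-(1)$ and $x\mapsto f(x)^+(1)$, are continuous on $K$. This is immediate from the definition of $d_\infty$: for any $u,v\in\mathbb{E}^1$ one has $|u^-(1)-v^-(1)|\le d_\infty(u,v)$ and $|u^+(1)-v^+(1)|\le d_\infty(u,v)$, so the continuity of $f$ from $K$ into $(\mathbb{E}^1,d_\infty)$ passes to these endpoints. Writing $a(x):=f(x)^-(1)$ and $b(x):=f(x)^+(1)$, we then have continuous functions with $a\le b$ and $rad(x,f)=\tfrac12(b(x)-a(x))$. Observe also that for a fixed real $\alpha$ and fixed $x$, the inner supremum collapses to $\sup\{|\alpha-t|:t\in[a(x),b(x)]\}=\max\{|\alpha-a(x)|,|\alpha-b(x)|\}$.

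Next I would set $r:=rad(f)=\sup_{x\in K}rad(x,f)$ and define the set-valued map $\Phi(x):=\{\alpha\in\mathbb{R}:\max\{|\alpha-a(x)|,|\alpha-b(x)|\}\le r\}=[\,b(x)-r,\;a(x)+r\,]$. Since $rad(x,f)\le r$ for every $x$, one checks $b(x)-r\le a(x)+r$, so each $\Phi(x)$ is a nonempty closed bounded interval, hence nonempty, closed and convex; and because its endpoints $b(\cdot)-r$ and $a(\cdot)+r$ are continuous, $\Phi$ is lower semicontinuous. As $K$ is compact Hausdorff, hence paracompact, and $\mathbb{R}$ is a Banach space, the Michael Selection Theorem yields a continuous selection $F_0\in C(K)$ with $F_0(x)\in\Phi(x)$ for every $x$.

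Finally I would verify optimality. By construction $\max\{|F_0(x)-a(x)|,|F_0(x)-b(x)|\}\le r$ for all $x$, whence $D(f,F_0)\le r=rad(f)$; combining this with the inequality $D(f,C(K))\ge rad(f)$ recorded in the Remark and with $D(f,C(K))\le D(f,F_0)$ gives $D(f,C(K))=D(f,F_0)=rad(f)$, as required. I do not expect a genuine obstacle here: the only points needing care are the continuity of the core endpoints (immediate from the form of $d_\infty$) and the lower semicontinuity of $\Phi$ (immediate from continuity of its endpoints). Indeed, the midpoint $m(x):=\tfrac12(a(x)+b(x))$ already lies in $\Phi(x)$ and is continuous, so one could take $F_0=m$ directly; I would nonetheless present the selection-theoretic argument because it transfers verbatim to the support-level variant $D_0$ described in the Remark, where the same interval map governs the near-optimal real values.
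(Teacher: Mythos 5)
Your proposal is correct and follows the same overall strategy as the paper: you construct exactly the paper's multifunction (the paper calls it $G(x):=\{\alpha\in\mathbb{R}:[f(x)^-(1),f(x)^+(1)]\subseteq[\alpha-rad(f),\alpha+rad(f)]\}$, which is your $\Phi$), verify it has nonempty closed convex values and is lower semicontinuous, and apply the Michael Selection Theorem, closing with the same squeeze $rad(f)\le D(f,C(K))\le D(f,F_0)\le rad(f)$. The differences are in execution, and they are to your advantage. First, by writing $\Phi(x)$ explicitly as the interval $[\,b(x)-r,\,a(x)+r\,]$ with $a,b$ continuous (which you justify correctly from $|u^{\pm}(1)-v^{\pm}(1)|\le d_\infty(u,v)$), the nonemptiness, closedness, convexity and lower semicontinuity all become essentially formal; the paper instead proves each property from the set-theoretic definition, and its lower semicontinuity argument is a rather laborious chase with $\epsilon$, $\delta_0$, $\delta'$ whose key step (``taking $\epsilon$ as small as necessary, we can find $\delta'<\delta_0$\dots'') is left somewhat implicit. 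Your only mild overstatement is calling LSC ``immediate'': it still deserves a one-line argument (e.g., the nearest-point projection of $\alpha_0\in\Phi(x_0)\cap O$ onto $\Phi(x)$ lies in $O$ for $x$ near $x_0$, by continuity of the endpoints). Second, your closing remark that the core midpoint $m(x)=\tfrac12\bigl(a(x)+b(x)\bigr)$ is itself a continuous selection gives a strictly more elementary proof that bypasses Michael's theorem entirely; the paper observes that the center of the core lies in $G(x)$ but never exploits that this center varies continuously, so this shortcut is genuinely absent from the paper and is worth recording, even if (as both you and the authors evidently prefer) the selection-theoretic formulation is kept for its robustness under the variants discussed in the paper's Remark.
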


\begin{proof}
Let us define a map $G:K\longrightarrow 2^{\mathbb{R}}$ such that, for each $x\in K$,
\[G(x):=\{\alpha\in \mathbb{R}:[f(x)^-(1),f(x)^+(1)]\subseteq [\alpha-rad(f),\alpha+rad(f)]\}\]

Let us first check that $G(x)\neq \emptyset$ for each $x\in K$. We know that
\[
rad(x,f):=\inf_{\alpha\in \mathbb{R}}\sup_{\beta\in [f(x)^-(1),f(x)^+(1)]}|\alpha-\beta|\le rad(f).
\]
Since $rad(x,f)$ turns out to be the radius of the interval $[f(x)^-(1),f(x)^+(1)]$, it is clear that the center of this interval belongs to $G(x)$.

\medskip
It is apparent that $G(x)$ is closed  for each $x\in K$ since the intervals which appear in its definition are closed.

\medskip
Now we take $\alpha_1,\alpha_2$ in $G(x)$ and $k_1,k_2\ge 0$ with $k_1+k_2=1$. Then, given $\alpha\in [f(x)^-(1),f(x)^+(1)]$,
\[|\alpha-(k_1\alpha_1+k_2\alpha_2)|=|\alpha(k_1+k_2)-(k_1\alpha_1+k_2\alpha_2)|\le\]
\[\le k_1|\alpha-\alpha_1|+k_2|\alpha-\alpha_2|\le rad(f),\]
which shows that $G(x)$ is convex for each $x\in K$.

%
\medskip
Next, we shall prove that the map $G$ is lower semicontinuous, that is, we have to check that the set
\[\mathcal{O}:=\{x\in K: G(x)\cap O\neq \emptyset\}\]
is open in $K$ for every open set $O\subset \mathbb{R}$. To this end, fix $x_0\in \mathcal{O}$ and take $\alpha_0\in G(x_0)\cap O$ for a certain open set $O$.
Let $\delta_0>0$ such that $(\alpha_0-\delta_0,\alpha_0+\delta_0)\subset O$.
From the continuity of $f$ and from the fact that
\[
[f(x_0)^-(1),f(x_0)^+(1)]\subset [\alpha_0-rad(f),\alpha_0+rad(f)],
\]
 we infer that, given $\epsilon>0$, as we have
 \[
[\alpha_0-rad(f),\alpha_0+rad(f)]\subset (\alpha_0-rad(f)-\epsilon,\alpha_0+rad(f)+\epsilon),
 \]
there exist an open neighborhood, $Q(\epsilon)$, of $x_0$ such that
\[
[f(x)^-(1),f(x)^+(1)]\subset (\alpha_0-(rad(f)+\epsilon),\alpha_0+rad(f)+\epsilon)
\]
for all $x\in Q(\epsilon)$. Our goal is to prove that $Q(\epsilon)\subseteq \mathcal{O}$ for some $\epsilon>0$ to get the openness of $\mathcal{O}$.

Fix $x_1\in Q(\epsilon)$. Since $G(x_1)\neq \emptyset$, there exists $\alpha_1\in G(x_1)$. That is,
\[
[f(x_1)^-(1),f(x_1)^+(1)]\subset [\alpha_1-rad(f),\alpha_1+rad(f)].
\]
Besides, we know that
\[
[f(x_1)^-(1),f(x_1)^+(1)]\subset (\alpha_0-(rad(f)+\epsilon),\alpha_0+rad(f)+\epsilon).
\]

Taking $\epsilon>0$ as small as necessary, we can find $\delta'<\delta_0$ such that
\begin{align*}
&[\alpha_0-(rad(f)+\epsilon),\alpha_0+rad(f)+\epsilon]\cap [\alpha_1-rad(f),\alpha_1+rad(f)]\\[1ex]
&\subset [(\alpha_0+\delta')-rad(f),(\alpha_0+\delta')+rad(f)]
\end{align*}
and consequently
\[
[f(x_1)^-(1),f(x_1)^+(1)]\subset [(\alpha_0+\delta')-rad(f),(\alpha_0+\delta')+rad(f)],
\]
which implies that $\alpha_0+\delta'\in G(x_1)\cap O$. Hence $x_1\in  \mathcal{O}$, as desired.

\bigskip
Gathering the information we have obtained so far , we know that $G$ is a lower semicontinuous mapping defined between $K$ and the closed convex subsets of $\mathbb{R}$. Hence, by the Michael Selection Theorem (\cite{Michael}), we infer that there exists $F_0\in C(K)$ such that $F_0(x)\in G(x)$ for all $x\in K$.

As a consequence of the above paragraph, we can deduce that
\[D(f,F_0) \le rad(f),\]
which combined with the comments before this theorem yields
\[D(f,F_0) = rad(f)=D(f,C(K)).\]

\end{proof}
\subsection*{Conclusions}

In this paper we address two problems of best approximation type in the context of fuzzy-number-valued continuous functions: (1) the problem of the uniform approximation of a fuzzy-number-valued
function continuous on a compact set by a set of other functions continuous on this compact set; and (2) the existence of the best approximation of a fuzzy-number-valued continuous function to the space of real-valued continuous functions. We obtain positive results in both cases.

\end{document}